\newtheorem{thm}{Theorem}
\newtheorem{definition}{Definition}
\newtheorem{example}{Example}
\newcommand{\bx}{\mathbf x}
\newcommand{\bw}{\mathbf w}
\begin{document}

\title[A new modified Newton iteration ]{A new modified Newton iteration for computing nonnegative $Z$-eigenpairs  of nonnegative tensors}

\author{Chun-Hua Guo}
\address{Department of Mathematics and Statistics, 
University of Regina, Regina,
SK S4S 0A2, Canada} 
\email{chun-hua.guo@uregina.ca}

\author{Wen-Wei Lin}
\address{Department of Applied Mathematics, National Yang Ming Chiao Tung University, Hsinchu 300, Taiwan}
\email{wwlin@math.nctu.edu.tw}

\author{Ching-Sung Liu}
\address{Department of Applied Mathematics, National University of Kaohsiung, Kaohsiung 811, Taiwan}
\email{chingsungliu@nuk.edu.tw}    

\subjclass{Primary 65F15; Secondary 15A50}

\keywords{Nonnegative tensor; Nonnegative $Z$-eigenpair; Modified Newton iteration; Quadratic convergence.}

\begin{abstract}

We propose a new modification of Newton iteration for finding  some nonnegative $Z$-eigenpairs of a nonnegative tensor.
The method has local quadratic convergence to a nonnegative
 eigenpair of a nonnegative tensor, under the usual assumption guaranteeing the local quadratic
convergence of the original Newton iteration.

\end{abstract}

\maketitle

\section{Introduction}
\label{sec1}

A real-valued $m$th-order $n$-dimensional tensor $\mathcal{A}$ has the form
\begin{equation*}
\mathcal{A}=(A_{i_{1}i_{2}\ldots i_{m}})\text{, }\quad A_{i_{1}i_{2}\ldots
i_{m}}\in \mathbb{R}\text{, }\quad 1\leq i_{1},i_{2},\ldots ,i_{m}\leq n.
\end{equation*}
The set of all such tensors is denoted by $\mathbb{R}^{[m,n]}$, and the set of all nonnegative tensors $\mathcal{A}\in \mathbb{R}^{[m,n]}$, for which $A_{i_{1}i_{2}\ldots
i_{m}}\geq 0$ for all $i_{1}, i_{2}, \ldots, 
i_{m}$, is denoted by  $\mathbb{R}_{+}^{[m,n]}$. 
We use $x_i$ or $(\mathbf{x})_i$  to represent the $i$th element of a column vector $\mathbf{x}$.

\begin{definition}[\protect\cite{Q05,CZ13}]
Let $\mathcal{A}\in \mathbb{R}^{[m,n]}$. 
 We say that $(\mathbf{x}, \lambda )\in \left( \mathbb{R}^{n}\backslash \{0\}\right)  \times \mathbb{R}$ is a $Z$-eigenpair
(eigenvector-eigenvalue) of $\mathcal{A}$ if
 \begin{equation}
\mathcal{A}\mathbf{x}^{m-1}=\lambda \mathbf{x}, \quad \|\mathbf{x}\|=1,   \label{eq: NEP}
\end{equation}
where $(\mathcal{A}\mathbf{x}^{m-1})_i=\sum_{i_{2},\ldots
,i_{m}=1}^{n}A_{ii_{2}\ldots i_{m}}x_{i_{2}}\ldots x_{i_{m}}$ for $i=1, \ldots, n$.  
\end{definition}

The vector norm in \eqref{eq: NEP} can be $2$-norm or $1$-norm.  A $Z$-eigenpair is called a $Z_2$-eigenpair when $2$-norm is used, and 
  a $Z_1$-eigenpair when $1$-norm is used. 
It is shown in \cite{CZ13} that, for $\mathbf{x}$ with $\|\mathbf{x}\|_1=1$, 
$(\mathbf{x}, \lambda)$ is a $Z_1$-eigenpair if and only if 
$\left ( \frac{\mathbf{x}}{\|\mathbf{x}\|_2},  \frac{\lambda}{\|\mathbf{x}\|_2^{m-2}} \right )$
 is a $Z_2$-eigenpair. In this paper, we are interested in $Z_1$-eigenpairs and a $Z_1$-eigenpair will be referred to as a $Z$-eigenpair or simply an eigenpair. 

A modified Newton iteration (MNI) proposed in \cite{GLL19} can usually 
find some nonnegative eigenpairs of a nonnegative tensor using different initial vectors. A local quadratic convergence result is proved there for positive eigenpairs. 
(All nonnegative $Z$-eigenpairs are positive when the tensor is irreducible \cite{CPZ13}). It is observed in \cite{GLL19} that MNI can also compute nonnegative eigenpairs with some zero components in the eigenvectors, 
but the convergence is much slower.  
Recently, another modified Newton iteration (called PNI) is proposed in \cite{BLLX21}, where a projection is used at a later stage than in \cite{GLL19}. 
PNI can significantly speed up the convergence in computing nonnegative eigenpairs with some zero components in the eigenvectors. 
In this paper, we will address some issues associated with PNI, particularly the determination of $\lambda_k$ in the sequence $(\lambda_k, \mathbf{x}_k)$ approximating a nonnegative eigenpair of the tensor, and present a new modification of the Newton iteration.

\section{A new modified Newton iteration}
\label{sec2}

To compute an eigenpair $(\mathbf{x}_{\ast}, \lambda_{\ast})$ with
$\mathbf{x}_{\ast}\ge 0$ and $\|\mathbf{x}_{\ast}\|_1=\mathbf{e}^T\mathbf{x}
_{\ast}=1$, where $\mathbf{e}=[1,\ldots ,1]^{T},$ we can apply Newton's method to $\mathbf{f}(\mathbf{x},\lambda )=0$, where 

\begin{equation} 
\mathbf{f}(\mathbf{x},\lambda )=\left[
\begin{array}{c}
\lambda \mathbf{x}-\mathcal{A}\mathbf{x}^{m-1}            \\
\mathbf{e}^{T}\mathbf{x}-1
\end{array}
\right].  \label{eq:Fx}
\end{equation}
The Jacobian of $\mathbf{f}(x,\lambda )$ is given by
\begin{equation}
\mathbf{Jf}(\mathbf{x},\lambda )=\left[
\begin{array}{cc}
\lambda I-T(\mathbf{x}) & \mathbf{x} \\
\mathbf{e}^{T} & 0
\end{array}
\right] ,  \label{eq: graF}
\end{equation}
where the entries of $T(\mathbf{x})$ are
\begin{equation*}  
T(\mathbf{x})_{ij}=\frac{\partial}{\partial x_j}\left( \mathcal{A}\mathbf{x}^{m-1}\right)_i.
\end{equation*}

For a given approximation $(\widehat{\mathbf{x}}_{k},\widehat{\lambda }_{k})$  to $(\mathbf{x}_*, \lambda_*)$, 
Newton's method determines the next approximation $(\widehat{\mathbf{x}}_{k+1},\widehat{\lambda }_{k+1})$ by

\begin{align}
\left[
\begin{array}{cc}
\widehat{\lambda}_k I-T({\widehat{\mathbf{x}}}_{k}) & \widehat{\mathbf{x}}_{k} \\
{\mathbf{e}}^{T} & 0
\end{array}
\right] \left[
\begin{array}{c}
\mathbf{d}_{k} \\
\delta _{k}
\end{array}
\right] & =\left[
\begin{array}{c}
\widehat{\lambda }_{k} \widehat{\mathbf{x}}_{k} -\mathcal{A}\widehat{\mathbf{x}}_{k}^{m-1}    \\
{\mathbf{e}}^{T}\widehat{\mathbf{x}}_{k}-1
\end{array}
\right],  \label{eq:step1} \\
\widehat{\mathbf{x}}_{k+1}& =\widehat{\mathbf{x}}_{k}\,-\mathbf{d}_{k},
\label{eq:step2} \\
\widehat{\lambda }_{k+1}& =\widehat{\lambda }_{k}-\delta _{k}, 
\label{eq:step3}
\end{align}
assuming that 
\begin{equation}\label{coef}
\left[
\begin{array}{cc}
\widehat{\lambda}_k I-T({\widehat{\mathbf{x}}}_{k}) & \widehat{\mathbf{x}}_{k} \\
{\mathbf{e}}^{T} & 0
\end{array}
\right] 
\end{equation}
is nonsingular. 

To put our new modified Newton iteration in a proper setting, we give below a brief description of MNI in \cite{GLL19} and PNI in \cite{BLLX21}. 
We also mention that a Newton-based method has been studied in \cite{JWN18} for computing $Z$-eigenpairs of symmetric tensors. 

When  ${\mathbf{e}}^{T}\widehat{\mathbf{x}}_{k}=1$ and $\widehat{\lambda}_k I-T({\widehat{\mathbf{x}}}_{k})$ is nonsingular, 
$(\widehat{\mathbf{x}}_{k+1},\widehat{\lambda }_{k+1})$ is also given by (\cite{GLL19,BLLX21})
\begin{align}
\widehat{\mathbf{x}}_{k+1}& =
\frac{1}{m-1}\left ( (m-2) \widehat{\mathbf{x}}_{k}+\frac{1}{\mathbf{e}^{T}
\widehat{\mathbf{w}}_{k}}\widehat{\mathbf{w}}_{k} \right ),  \label{eq:newtonup} \\
\widehat{\lambda }_{k+1}& =\frac{1}{m-1}\left (
\widehat{\lambda }_{k}-\frac{1}{{\mathbf{e}}^{T}\widehat{\mathbf{w}}
_{k} }\right ),   \label{ndown}
\end{align}
where 
\begin{equation}  \label{vw}
\widehat{\mathbf{w}}_{k}=\left (\widehat{\lambda}_k I-T({\widehat{\mathbf{x}}}_{k})\right )^{-1}\widehat{\mathbf{x}}_{k}.
\end{equation}
That $\mathbf{e}^{T}\widehat{\mathbf{w}}_{k}\ne 0$ is equivalent to the nonsingularity of \eqref{coef}. 

In \cite{GLL19}, a projection is applied to the vector $\widehat{\mathbf{w}}_{k}$ used in \eqref{eq:newtonup} (but not to 
the vector $\widehat{\mathbf{w}}_{k}$ used in \eqref{ndown}): 
\begin{equation}
{\mathbf{w}}_k=\left\{
\begin{array}{ll}
\max (\widehat{\mathbf{w}}_{k}, \mathbf{0}),  & \text{if }        |\max \widehat{\mathbf{w}}_{k}|>|\min \widehat{\mathbf{w}}_{k}|,                   \\
 \min (\widehat{\mathbf{w}}_{k}, \mathbf{0}),             &  \text{if }        |\max \widehat{\mathbf{w}}_{k}|\le |\min \widehat{\mathbf{w}}_{k}|,            \\
\end{array} 
\right.  \label{wupdate}
\end{equation}
where $\max$ and $\min$ are taken elementwise. 
With $\widehat{\mathbf{w}}_{k}$ in \eqref{eq:newtonup} replaced by ${\mathbf{w}}_k$, the vector $\widehat{\mathbf{x}}_{k+1}$ 
from \eqref{eq:newtonup} is denoted by ${\mathbf{x}}_{k+1}$. This gives a new approximation to ${\mathbf{x}}_{\ast}$: 
${\mathbf{x}}_{k+1}>0$ with ${\mathbf{e}}^{T}{\mathbf{x}}_{k+1}=1$. Note that ${\mathbf{x}}_{k+1}$ can be obtained even when  $\mathbf{e}^{T}\widehat{\mathbf{w}}_{k}= 0$. 

For a pair of $n$-vectors $\mathbf{v}$ and $\mathbf{w}$ with $v_i\ne 0$ for each $i$, we define
\begin{equation}\label{maxmin}
\max \left( \frac{\mathbf{w}}{\mathbf{v}}\right) =\underset{i}{\max }
\left (\frac{w_i}{v_i}\right ) ,\text{ \ }\min \left(
\frac{\mathbf{w}}{\mathbf{v}}\right) =\underset{i}{\min }\left (\frac{w_i}{v_i}\right ). 
\end{equation}

A new approximation $\lambda_{k+1}$  to $\lambda_{\ast}$ is suggested in \cite{GLL19} to be any value in the interval $[\underline{\lambda }_{k+1}, \overline{\lambda }_{k+1}],$
where 
\begin{equation*}  
\underline{\lambda }_{k+1} =\min \left( \frac{\mathcal{A}{\mathbf{x}}
_{k+1}^{m-1}}{{\mathbf{x}}_{k+1}}\right), \quad 
\overline{\lambda }_{k+1} =\max \left( \frac{\mathcal{A}{\mathbf{x}}
_{k+1}^{m-1}}{{\mathbf{x}}_{k+1}}\right), 
\end{equation*}
such that ${\lambda}_{k+1} I-T({\mathbf{x}_{k+1}})$ is not singular or nearly singular. 

The following modified Newton iteration (Algorithm \ref{alg:MNI}) has been presented in \cite{GLL19} for finding a nonnegative eigenpair of a  nonnegative tensor $\mathcal{A}$. 

\begin{algorithm}
\begin{enumerate}
  \item   Given $\bx_0 > 0$ with $\Vert \bx_0\Vert_1 =1$, and ${\sf tol}>0$.
\item Compute  $\overline{\lambda}_{0}= \max \left( \frac{\mathcal{A}\mathbf{x}_{0}^{m-1}}{\mathbf{x}_{0}}\right)$ and 
 $\underline{\lambda}_{0}= \min \left( \frac{\mathcal{A}\mathbf{x}_{0}^{m-1}}{\mathbf{x}_{0}}\right)$. 
\item  Choose $\lambda_{0}\in [\underline{\lambda }_{0}, \overline{\lambda }_{0}]$
such that ${\lambda}_{0} I- T({\mathbf{x}_{0}})$ is nonsingular. 
  \item   {\bf for} $k =0,1,2,\dots$    {\bf until} $\left\Vert \mathcal{A}\mathbf{x}_{k}^{m-1} - {\lambda }_{k}\mathbf{x}_{k} \right\Vert_1  <{\sf tol}$.
  \item   \quad Solve the linear system $\left ( {\lambda}_k I-T(\mathbf{x}_{k})\right) \widehat{\mathbf{w}}_{k}=\mathbf{x}_{k}$.
  \item   \quad Determine the vector $\bw_{k}$ by \eqref{wupdate}.
  \item   \quad Compute the vector $\widetilde{\mathbf{x}}_{k+1} =(m-2)\mathbf{x}_{k}\,+  \mathbf{w}_k/  (\mathbf{e}^T  \mathbf{w}_{k})$.
  \item   \quad Normalize the vector $\widetilde{\mathbf{x}}_{k+1}$:   $\bx_{k+1}= \widetilde{\mathbf{x}}_{k+1}/\Vert \widetilde{\mathbf{x}}_{k+1}\Vert_1$.
  \item   \quad Compute $\overline{\lambda }_{k+1} =\max \left( \frac{\mathcal{A}\mathbf{x}_{k+1}^{m-1}}{\mathbf{x}_{k+1}}\right)$ and
  $\underline{\lambda }_{k+1} =\min \left( \frac{\mathcal{A}\mathbf{x}_{k+1}^{m-1}}{\mathbf{x}_{k+1}}\right)$.
 \item \quad Choose $\lambda_{k+1}\in [\underline{\lambda }_{k+1}, \overline{\lambda }_{k+1}]$
such that ${\lambda}_{k+1} I- T({\mathbf{x}_{k+1}})$ is nonsingular. 
\end{enumerate}
\caption{Modified Newton iteration (MNI)}
\label{alg:MNI}
\end{algorithm}

The default value for $\lambda_0$ in line 3 of Algorithm \ref{alg:MNI}  is $\lambda_{0}=\overline{\lambda }_{0}$. 
When  ${\bf e}^T\widehat{\mathbf{w}}_{k}\ne 0$ for $\widehat{\mathbf{w}}_{k}$   in line 5, we use \eqref{ndown} to get 
$$
\widehat{\lambda }_{k+1} =\frac{1}{m-1}\left (
{\lambda }_{k}-\frac{1}{{\mathbf{e}}^{T}\widehat{\mathbf{w}}_{k} }\right ). 
$$
The default value for $\lambda_{k+1}$ in line 10 is then given by 
\begin{equation*}
\lambda_{k+1}=\left \{ \begin{array}{ll}
 \overline{\lambda }_{k+1} & \mbox{ if }   {\bf e}^T\widehat{\mathbf{w}}_{k}= 0 \mbox{ or } \widehat{\lambda }_{k+1}> \overline{\lambda }_{k+1},\\
\underline{\lambda }_{k+1} & \mbox{ if }  \widehat{\lambda }_{k+1}< \underline{\lambda }_{k+1},\\
\widehat{\lambda }_{k+1} & \mbox{  if }     \widehat{\lambda }_{k+1}       \in [\underline{\lambda }_{k+1}, \overline{\lambda }_{k+1}].
\end{array} \right. 
\end{equation*}
If the chosen $\lambda_k$ is such that ${\lambda}_{k} I-T({\mathbf{x}_{k}})$ is singular or nearly singular, we can always adjust it within the interval $[\underline{\lambda}_k, \overline{\lambda}_k]$. 
Note that we have $\underline{\lambda }_{k}<\overline{\lambda }_{k}$ unless $(\mathbf{x}_k, \lambda_k)$ is already an eigenpair. 

The PNI in \cite{BLLX21} can be described through a comparison with MNI. In PNI, line 6 of MNI is not performed. So the  $\mathbf{w}_k$ in line 7 is still the $\widehat{\mathbf{w}}_{k}$ 
in line 5. It is assumed in \cite{BLLX21} that  ${\bf e}^T\widehat{\mathbf{w}}_{k}\ne  0$. Then 
${\bf e}^T\widetilde{\mathbf{x}}_{k+1}=m-1>0$ and $\widetilde{\mathbf{x}}_{k+1}$ has some positive components. In PNI, all negative components of   $\widetilde{\mathbf{x}}_{k+1}$ 
are replaced by $0$. That is, the update $\widetilde{\mathbf{x}}_{k+1}\leftarrow \max (\widetilde{\mathbf{x}}_{k+1}, \mathbf{0})$ is performed before line 8 is performed. 
In this way, PNI can potentially find nonnegative eigenvectors with some zero components much more quickly. But it has also created a new problem: in line 9 of MNI the definition in \eqref{maxmin} must be modified. 
For a pair of nonnegative vectors $\mathbf{w}$ and $\mathbf{v}\ne \mathbf{0}$, it is defined in \cite{BLLX21} that 
\begin{eqnarray*}
&&\max \left( \frac{\mathbf{w}}{\mathbf{v}}\right) =\left \{ \begin{array}{ll} 
\max \left \{ \underset{i\in S_2}{\max }
\left (\frac{w_i}{v_i}\right ) ,  \underset{i\in S_1 \setminus (S_1\cap S_2)}{\max }
\left (w_i \right ) 
\right \}  & \quad \mbox{if }   S_1 \setminus (S_1\cap S_2) \ne \emptyset,\\
 \underset{i\in S_2}{\max }\left (\frac{w_i}{v_i}\right )  &  \quad \mbox{if }   S_1 \setminus (S_1\cap S_2) = \emptyset, 
\end{array} \right.        \\
&& \min \left( \frac{\mathbf{w}}{\mathbf{v}}\right) =\left \{ \begin{array}{ll} 
0  &  \quad \mbox{if }   S_1 \setminus (S_1\cap S_2) \ne \emptyset,\\
 \underset{i\in S_2}{\min }\left (\frac{w_i}{v_i}\right )  & \quad  \mbox{if }   S_1 \setminus (S_1\cap S_2) = \emptyset, 
\end{array} \right. 
\end{eqnarray*}
where $S_1$ and $S_2$ are the index sets of all nonzero elements of $\mathbf{w}$ and $\mathbf{v}$,   respectively. 
Note that the above definition reduces to definition \eqref{maxmin} when $\mathbf{v}> \mathbf{0}$. 

In PNI \cite{BLLX21}, $\lambda_{k+1}$ is determined using
\begin{equation}\label{lamBLLX}
\lambda_{k+1}=\left \{ \begin{array}{ll}
\widehat{\lambda }_{k+1}+\beta_{k+1} \left ( \overline{\lambda }_{k+1}- \widehat{\lambda }_{k+1}\right )    & \mbox{ if }   \widehat{\lambda }_{k+1}\le         (\underline{\lambda }_{k+1} +\overline{\lambda }_{k+1})/2,\\
\widehat{\lambda }_{k+1}+\beta_{k+1} \left ( \underline{\lambda }_{k+1}- \widehat{\lambda }_{k+1}\right )    & \mbox{ if }   \widehat{\lambda }_{k+1}>         (\underline{\lambda }_{k+1} +\overline{\lambda }_{k+1})/2,
\end{array} \right. 
\end{equation}
where $0\le \beta_{k+1}\le 1$ is a given constant such that $\lambda_{k+1}I-T(\mathbf{x}_{k+1})$ is nonsingular. 
It is suggested in \cite{BLLX21} that $\beta_{k+1}$ be adjusted such that $\lambda_{k+1}\in [\underline{\lambda }_{k+1}, \overline{\lambda }_{k+1}]$
to get a better approximation $\lambda_{k+1}$  to $\lambda_{\ast}$. 

To prove local quadratic convergence of PNI, it is assumed in \cite{BLLX21} that 
\begin{equation} \label{betaBLLX}
\beta_{k+1}\le \min \left \{1, \frac{\max \left \{ \|\mathbf{x}_{k+1}-\mathbf{x}_{\ast}\|_1, |\widehat{\lambda }_{k+1}- {\lambda }_{\ast}|\right \}}
{\max \left \{ |\overline{\lambda}_{k+1} - \widehat{\lambda}_{k+1}|, |\underline{\lambda }_{k+1} -\widehat{\lambda}_{k+1}|\right \}}\right \}
\end{equation}
in \eqref{lamBLLX}. It is likely that the assumption holds for a small $\beta_{k+1}$ but it is difficult to make sure, since the unknown $(\mathbf{x}_{\ast}, {\lambda }_{\ast})$ is involved. Moreover, a small $\beta_{k+1}$ may not ensure $\lambda_{k+1}\in [\underline{\lambda }_{k+1}, \overline{\lambda }_{k+1}]$. 
This has led us to re-examine the idea of using $\underline{\lambda }_{k+1}$ and $\overline{\lambda }_{k+1}$ to choose $\lambda_{k+1}$.

We consider the following example in \cite{GLL19}. 
\begin{example}
\label{ex3}
Consider  $\mathcal{A}\in \mathbb{R}_{+}^{[4,2]}$ defined by
$$
A_{1111}= 1.1,\ \   A_{2222}=1.2,\ \  A_{1112}=A_{1222}=0.25, \text{ and }A_{ijkl}=0 \text{ elsewhere}.
$$
\end{example}
The tensor has three nonnegative $Z$-eigenpairs, one of them is (rounded to four digits)
$\left (\left [  0.1874, 0.8126     \right ]^T,   0.7923      \right )$. We take $\mathbf{x}$ to be vectors near the eigenvector and determine the corresponding 
intervals $[\underline{\lambda }, \overline{\lambda }]$:
\begin{itemize}
\item  For $\mathbf{x}=[0.19, 0.81]^T$, $[\underline{\lambda }, \overline{\lambda }]=[0.7774, 0.7873]$.
\item For $\mathbf{x}=[0.187, 0.813]^T$, $[\underline{\lambda }, \overline{\lambda }]=[0.7932, 0.7949]$. 
\item For $\mathbf{x}=[0.1875, 0.8125]^T$,  $[\underline{\lambda }, \overline{\lambda }]=[0.7919, 0.7922]$.
\end{itemize}
Note that the eigenvalue  $0.7923$ is not in any of these intervals. This means that  adjusting $\widehat{\lambda }_{k+1}$ from Newton's method 
to $\lambda_{k+1}\in [\underline{\lambda }_{k+1}, \overline{\lambda }_{k+1}]$ will not necessarily give a better approximation to $\lambda_{\ast}$. 

We will assume that $\mathbf{J}\mathbf{f}(\mathbf{x}_{\ast}, \lambda_{\ast})$ is
nonsingular, but ${\lambda}_{\ast} I- T(\mathbf{x}_{\ast})$ may still be singular. 

\begin{example}
\label{ex2}
Consider  $\mathcal{A}\in \mathbb{R}_{+}^{[3,3]}$ defined by
$$
A_{2j3}=A_{3j2}=A_{3j3}=1 \text{ for } j=1,2,3, \text{ and }A_{ijk}=0 \text{ elsewhere}.
$$
One eigenpair of $\mathcal{A}$ is $(\mathbf{x}_{\ast}, \lambda_{\ast})=([1, 0,0]^T, 0)$ and 
$$
\mathbf{J}\mathbf{f}(\mathbf{x}_{\ast}, \lambda_{\ast})=\left [\begin{array}{rrrr}
0 & 0 & 0&1\\
0 &  0 & -1&0\\
0 & -1 & -1& 0\\
1 & 1 & 1& 0
\end{array} \right ]
$$
is nonsingular, while ${\lambda}_{\ast} I- T(\mathbf{x}_{\ast})$ is singular. 
\end{example}

In situations like this, it is more appropriate to determine $(\widehat{\mathbf{x}}_{k+1},\widehat{\lambda }_{k+1})$ using \eqref{eq:step1}--\eqref{eq:step3}, instead of \eqref{eq:newtonup}--\eqref{ndown}. 

These considerations lead to a simple modification of Newton iteration (Algorithm \ref{alg:MPNI}) for finding a nonnegative eigenpair of a  nonnegative tensor $\mathcal{A}$. 
For a vector $\mathbf{x}$ having both positive and negative components,  a nonnegative (and nonzero) vector is obtained as in \cite{GLY15} and \cite{BLLX21}:
$$
\textsf{proj}({\mathbf{x}})=\frac{\max ({\mathbf{x}}, \mathbf{0})}{\|\max ({\mathbf{x}}, \mathbf{0})\|_1}. 
$$

\begin{algorithm}
\begin{enumerate}
  \item   Given $\bx_0 > 0$ with $\Vert \bx_0\Vert_1 =1$, and ${\sf tol}>0$.
\item Compute  $\lambda_{0}= \max \left( \frac{\mathcal{A}\mathbf{x}_{0}^{m-1}}{\mathbf{x}_{0}}\right).$
  \item   {\bf for} $k =0,1,2,\dots$    {\bf until} $\left\Vert \mathcal{A}\mathbf{x}_{k}^{m-1} - {\lambda }_{k}\mathbf{x}_{k} \right\Vert_1  <{\sf tol}$.
  \item \quad  Compute $(\widehat{\mathbf{x}}_{k+1},\widehat{\lambda }_{k+1})$ using \eqref{eq:step1}--\eqref{eq:step3}, 
with  $(\widehat{\mathbf{x}}_{k},\widehat{\lambda }_{k})=({\mathbf{x}}_{k}, {\lambda }_{k})$. 
\item \quad Set ${\mathbf{x}}_{k+1} =\textsf{proj}(\widehat{\mathbf{x}}_{k+1})$. 
  \item \quad Set $\lambda_{k+1}=\max (\widehat{\lambda }_{k+1}, 0)$. 
\end{enumerate}
\caption{Modified projected Newton iteration (MPNI)}
\label{alg:MPNI}
\end{algorithm}

In PNI \cite{BLLX21}, effort is made to choose $\lambda_k$ such that ${\lambda}_{k} I-T({\mathbf{x}_{k}})$ is nonsingular. Actually, the choice of 
$\lambda_k$ should also ensure that 
\begin{equation}\label{coef2}
\left[
\begin{array}{cc}
{\lambda}_k I-T({{\mathbf{x}}}_{k}) & {\mathbf{x}}_{k} \\
{\mathbf{e}}^{T} & 0
\end{array}
\right] 
\end{equation}
is nonsingular. Otherwise, division by zero occurs in the computation of the vector $\widetilde{\mathbf{x}}_{k+1}$ since ${\bf e}^T\widehat{\mathbf{w}}_{k}= 0$ . 
Our Algorithm \ref{alg:MPNI} also requires the nonsingularity of \eqref{coef2}. 
As $\lambda \to \infty$, we have 
$$
\left |\begin{array}{cc}
{\lambda}  I-T({{\mathbf{x}}}_{k}) & {\mathbf{x}}_{k} \\
{\mathbf{e}}^{T} & 0
\end{array} \right |=|\lambda I-T({{\mathbf{x}}}_{k})|(-{\bf e}^T(\lambda I- T({{\mathbf{x}}}_{k}))^{-1}\mathbf{x}_{k})
\sim - \lambda^{n-1}
$$ 
since ${\mathbf{e}}^{T}{\mathbf{x}}_{k}=1$. So the determinant is a polynomial of degree $n-1$. 
Therefore, when the matrix \eqref{coef2} is singular, we can choose a small $\epsilon_k >0$ and use the update 
$\lambda_k\leftarrow \lambda_k+\epsilon_k$ to ensure that the new matrix \eqref{coef2} is nonsingular.

We can easily prove that Algorithm~\ref{alg:MPNI} (MPNI) has local quadratic
convergence under the usual assumption that guarantees the  local quadratic
convergence of the original Newton iteration.

\begin{thm}
\label{quadratic} 
 Let $\left( \mathbf{x}_{\ast }, \lambda_{\ast}\right)$ be
a nonnegative  eigenpair of   a  nonnegative tensor  $\mathcal{A}$, with $\mathbf{J}\mathbf{f}(\mathbf{x}_{\ast}, \lambda_{\ast})$ being nonsingular, and let  $\left\{ (\mathbf{x}_{k}, {\lambda }_{k})\right\}$
be generated by Algorithm~\ref{alg:MPNI}. Suppose that $(\mathbf{x}_{k_0}, {\lambda}_{k_0})$ is sufficiently close to $\left( \mathbf{x}_{\ast }, \lambda_{\ast}\right)$ for some $k_0\ge 0$. 
Then $(\mathbf{x}_{k}, {\lambda }_{k})$ converges
to $(\mathbf{x}_{\ast}, \lambda_{\ast})$ quadratically.
\end{thm}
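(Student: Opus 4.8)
The plan is to combine the classical local quadratic convergence of Newton's method applied to $\mathbf{f}(\mathbf{x},\lambda)=0$ with the observation that the two post-processing steps in lines 5--6 of Algorithm~\ref{alg:MPNI} are locally nonexpansive up to a constant and leave the target $(\mathbf{x}_*,\lambda_*)$ fixed. First I would record that the map $\mathbf{f}$ in \eqref{eq:Fx} is polynomial, hence $C^\infty$, and that $\mathbf{Jf}(\mathbf{x}_*,\lambda_*)$ is nonsingular by hypothesis. The standard Newton convergence theorem then provides a neighborhood $N$ of $(\mathbf{x}_*,\lambda_*)$ and a constant $c>0$ such that, whenever $(\mathbf{x}_k,\lambda_k)\in N$, the matrix \eqref{coef2} (which is exactly $\mathbf{Jf}(\mathbf{x}_k,\lambda_k)$) is nonsingular, the Newton iterate $(\widehat{\mathbf{x}}_{k+1},\widehat{\lambda}_{k+1})$ defined by \eqref{eq:step1}--\eqref{eq:step3} exists, and
$$
\|(\widehat{\mathbf{x}}_{k+1},\widehat{\lambda}_{k+1})-(\mathbf{x}_*,\lambda_*)\|\le c\,\|(\mathbf{x}_k,\lambda_k)-(\mathbf{x}_*,\lambda_*)\|^2 .
$$
In particular, inside $N$ no $\epsilon_k$-adjustment of $\lambda_k$ is ever triggered.

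Next I would check two elementary facts about the limit. Since $\mathcal{A}\ge 0$ and $\mathbf{x}_*\ge 0$, the vector $\mathcal{A}\mathbf{x}_*^{m-1}=\lambda_*\mathbf{x}_*$ is nonnegative, and comparing components at any index $i$ with $(\mathbf{x}_*)_i>0$ yields $\lambda_*\ge 0$; moreover $\mathbf{x}_*\ge 0$ with $\|\mathbf{x}_*\|_1=1$ gives $\textsf{proj}(\mathbf{x}_*)=\mathbf{x}_*$ and $\max(\lambda_*,0)=\lambda_*$. I would then establish local Lipschitz bounds for the two projections. Because $t\mapsto\max(t,0)$ is $1$-Lipschitz on $\mathbb{R}$, one has $|\max(\mu,0)-\lambda_*|\le|\mu-\lambda_*|$ for all $\mu$, and componentwise $\|\max(\mathbf{x},\mathbf{0})-\mathbf{x}_*\|_1\le\|\mathbf{x}-\mathbf{x}_*\|_1$. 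For the normalization, writing $\mathbf{y}=\max(\mathbf{x},\mathbf{0})$ and using $\|\mathbf{x}_*\|_1=1$, a short triangle-inequality computation gives $\|\mathbf{y}/\|\mathbf{y}\|_1-\mathbf{x}_*\|_1\le |1-\|\mathbf{y}\|_1|+\|\mathbf{y}-\mathbf{x}_*\|_1\le 2\|\mathbf{y}-\mathbf{x}_*\|_1$ (valid as soon as $\|\mathbf{y}\|_1\ne 0$, which holds for $\mathbf{x}$ near $\mathbf{x}_*$). Hence $\|\textsf{proj}(\mathbf{x})-\mathbf{x}_*\|_1\le 2\|\mathbf{x}-\mathbf{x}_*\|_1$ near $\mathbf{x}_*$, and together with lines 5--6 there is a constant $L$ with
$$
\|(\mathbf{x}_{k+1},\lambda_{k+1})-(\mathbf{x}_*,\lambda_*)\|\le L\,\|(\widehat{\mathbf{x}}_{k+1},\widehat{\lambda}_{k+1})-(\mathbf{x}_*,\lambda_*)\|
$$
whenever $(\widehat{\mathbf{x}}_{k+1},\widehat{\lambda}_{k+1})$ is close enough to $(\mathbf{x}_*,\lambda_*)$.

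Finally I would close the induction. After shrinking $N$, I would pick $\delta>0$ so that the ball $B_\delta$ about $(\mathbf{x}_*,\lambda_*)$ lies in $N$, the Lipschitz bound above holds on $B_\delta$, and $Lc\delta<1$; then chaining the two displayed estimates gives, for any $k$ with $(\mathbf{x}_k,\lambda_k)\in B_\delta$,
$$
\|(\mathbf{x}_{k+1},\lambda_{k+1})-(\mathbf{x}_*,\lambda_*)\|\le Lc\,\|(\mathbf{x}_k,\lambda_k)-(\mathbf{x}_*,\lambda_*)\|^2,
$$
which both keeps $(\mathbf{x}_{k+1},\lambda_{k+1})$ in $B_\delta$ and drives $\|(\mathbf{x}_k,\lambda_k)-(\mathbf{x}_*,\lambda_*)\|\to 0$ at least quadratically once $(\mathbf{x}_{k_0},\lambda_{k_0})\in B_\delta$. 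Here one also notes that after line 5 the vector $\mathbf{x}_{k+1}$ is automatically nonnegative with $\|\mathbf{x}_{k+1}\|_1=1$ and after line 6 one has $\lambda_{k+1}\ge 0$, so feasibility is never lost even when some components of $\mathbf{x}_{k+1}$ vanish. The only point requiring care—the mild obstacle—is confirming that the projection step does not move the iterate out of the region where the next Newton step is well posed; this is exactly why one works with a full neighborhood $N$ on which $\mathbf{Jf}$ stays nonsingular, so that remaining in $B_\delta\subseteq N$ is all that is needed, and the Lipschitz estimates above guarantee precisely that.
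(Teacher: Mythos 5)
Your proof is correct, and its overall skeleton matches the paper's: invoke the classical local quadratic convergence of Newton's method for $\mathbf{f}$, then show that the two projection steps in lines 5--6 fix $(\mathbf{x}_*,\lambda_*)$ and do not spoil the quadratic estimate. The one place where you diverge is the treatment of the vector projection. The paper exploits a structural fact you do not use: because the last row of \eqref{eq:step1} reads $\mathbf{e}^T\mathbf{d}_k=\mathbf{e}^T\mathbf{x}_k-1=0$, the Newton iterate automatically satisfies $\mathbf{e}^T\widehat{\mathbf{x}}_{k+1}=1$, and then a cited result (Lemma~4 of Liu--Li--Vong) gives the \emph{exact} nonexpansiveness $\|\textsf{proj}(\widehat{\mathbf{x}}_{k+1})-\mathbf{x}_*\|_1\le\|\widehat{\mathbf{x}}_{k+1}-\mathbf{x}_*\|_1$, so the quadratic bound survives with the same constant $c$. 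You instead prove a generic local Lipschitz bound with constant $2$ (clip is $1$-Lipschitz componentwise, plus a triangle-inequality estimate for the renormalization), which is entirely self-contained and still yields quadratic convergence with constant $Lc$. What your route loses is the paper's concluding observation that the MPNI iterate is always at least as good as the raw Newton iterate, which depends on the constant being exactly $1$; what it gains is independence from the external lemma, and you are also more explicit than the paper about closing the induction (choosing $\delta$ with $Lc\delta<1$ so the iterates stay in the ball where the Jacobian remains nonsingular), a point the paper leaves implicit.
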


\begin{proof}
Since $\mathbf{J}\mathbf{f}(\mathbf{x}_{\ast}, \lambda_{\ast})$ is nonsingular, $\mathbf{J}\mathbf{f}(\mathbf{x}_{k}, \lambda_{k})$ is also nonsingular when $(\mathbf{x}_{k}, {\lambda}_{k})$ is sufficiently close to $\left( \mathbf{x}_{\ast }, \lambda_{\ast}\right)$. 
Let $(\widehat{\mathbf{x}}_{k+1},
\widehat{\lambda}_{k+1})$ be obtained by Newton's method as in \eqref{eq:step1}--\eqref{eq:step3}, with $\left (\widehat{\mathbf{x}}_{k}, \widehat{{\lambda }}_{k}\right )=\left (\mathbf{x}_{k},
{\lambda }_{k}\right )$. By a basic result of Newton's method (see \cite[Theorem 5.1.2]{K95} for example), 
there is a constant $c$ such
that
\begin{equation*}
\left\Vert \left[
\begin{array}{c}
\widehat{\mathbf{x}}_{k+1} \\
\widehat{\lambda }_{k+1}
\end{array}
\right] -\left[
\begin{array}{c}
\mathbf{x}_{\ast } \\
\lambda_{\ast}
\end{array}
\right] \right\Vert_1 \leq c \left\Vert \left[
\begin{array}{c}
\mathbf{x}_{k} \\
{\lambda }_{k}
\end{array}
\right] -\left[
\begin{array}{c}
\mathbf{x}_{\ast } \\
\lambda_{\ast}
\end{array}
\right] \right\Vert_1^{2}.  
\end{equation*}
Since $\lambda_{\ast}\ge 0$, we have 
$
|{\lambda }_{k+1}-\lambda_{\ast}|\le |\widehat{\lambda }_{k+1}-\lambda_{\ast}|. 
$
Since ${\bf e}^T\mathbf{x}_k=1$, we have ${\bf e}^T\mathbf{d}_k=0$ from \eqref{eq:step1} and ${\bf e}^T\widehat{\mathbf{x}}_{k+1}= {\bf e}^T\mathbf{x}_k-{\bf e}^T\mathbf{d}_k=1$ from \eqref{eq:step2}. 
It follows from \cite[Lemma 4]{LLV19} that $\|\mathbf{x}_{k+1}-\mathbf{x}_{\ast}\|_1\le \|\widehat{\mathbf{x}}_{k+1}-\mathbf{x}_{\ast}\|_1$.  
Therefore, \begin{equation*}
\left\Vert \left[
\begin{array}{c}
{\mathbf{x}}_{k+1} \\
{\lambda }_{k+1}
\end{array}
\right] -\left[
\begin{array}{c}
\mathbf{x}_{\ast } \\
\lambda_{\ast}
\end{array}
\right] \right\Vert_1 \leq c \left\Vert \left[
\begin{array}{c}
\mathbf{x}_{k} \\
{\lambda }_{k}
\end{array}
\right] -\left[
\begin{array}{c}
\mathbf{x}_{\ast } \\
\lambda_{\ast}
\end{array}
\right] \right\Vert_1^{2}
\end{equation*}
for all $(\mathbf{x}_{k}, {\lambda}_{k})$ sufficiently close to $\left( \mathbf{x}_{\ast }, \lambda_{\ast}\right)$. 
\qed 
\end{proof}

We remark that PNI in \cite{BLLX21} is mathematically equivalent to MPNI here under some favorable assumptions: 
\begin{enumerate}
\item 
$\widehat{\lambda}_k\ge 0$ for all $k$.
\item 
$\widehat{\lambda}_k I -T({\bf x}_k)$ is nonsingular for all $k$ and $\beta_k=0$ is used in \eqref{betaBLLX}. 
\item 
${\bf Jf}({\bf x}_k, \lambda_k)$ is nonsingular for all $k$. 
\end{enumerate}
Therefore, the good numerical performance of PNI reported in \cite{BLLX21} carries over to the MPNI here. But MPNI is simpler and more robust, and its local quadratic convergence can be easily proved without any additional assumptions such as the one in \eqref{betaBLLX}. 
As shown in the proof of Theorem \ref{quadratic}, MPNI has a nice feature: the approximation $({\mathbf{x}}_{k+1}, \lambda_{k+1})$ from MPNI is always better than $(\widehat{\mathbf{x}}_{k+1},\widehat{\lambda }_{k+1})$ from the Newton iteration (starting from the same $({\mathbf{x}}_{k}, {\lambda }_{k})$).

\section*{Acknowledgments} 
C.-H. Guo was supported in part by an NSERC Discovery Grant,
 W.-W. Lin was supported in part by the Ministry of Science and Technology, the National Center for Theoretical Sciences, and ST Yau Center at Chiao-Da in Taiwan, and
C.-S. Liu was supported in part by the Ministry of Science and Technology in Taiwan.\\

\end{document}